
\documentclass{amsart}

\usepackage{enumitem}
\setenumerate{nolistsep}
\setitemize{nolistsep}
\usepackage[square, numbers, comma, sort&compress]{natbib}
\usepackage[final]{pdfpages}
\usepackage{graphicx}
\usepackage{esint}
\usepackage{latexsym}
\usepackage{hyperref}
\usepackage{cleveref}
\usepackage{autonum}
\usepackage{mathtools}

\usepackage{amsmath}
\usepackage{amssymb}

\usepackage{lipsum}

\newcommand{\C}{{\mathbb C}}       
\newcommand{\R}{{\mathbb R}}       
\newcommand{\N}{{\mathbb N}}       %
\newcommand{\Z}{{\mathbb Z}}       

\usepackage{tikz}
\usetikzlibrary{graphs, angles, quotes,patterns,hobby}
\usepackage{pgfplots}
\pgfplotsset{compat=1.6}

\DeclareMathOperator{\supp}{supp}

\DeclareMathOperator{\dist}{dist}

\theoremstyle{plain}
\newtheorem{teo}{Theorem}

\theoremstyle{plain}
\newtheorem*{theor*}{Theorem}

\theoremstyle{plain}
\newtheorem{prop}{Proposition}

\theoremstyle{remark}

\newtheorem*{rem*}{Remark}

\theoremstyle{definition}

\theoremstyle{plain}

\theoremstyle{remark}

\theoremstyle{plain}
\newtheorem{lemm}{Lemma}




\numberwithin{equation}{section}

\usepackage{hyperref}
\hypersetup{
    colorlinks=true,
    linkcolor=blue,
    filecolor=blue,      
    urlcolor=blue,
    citecolor=red,
}

\begin{document}


\title[Counterexample]{$L^2$-bounded singular integrals on a purely unrectifiable set in $\R^d$}



\author{Joan Mateu}

\address{Joan Mateu
\\
Departament de Matem\`atiques
\\
Universitat Aut\`onoma de Barcelona
\\
08193 Bellaterra (Barcelona), Catalonia.
}
\email{mateu@mat.uab.cat}

\author{Laura Prat}

\address{Laura Prat
\\
Departament de Matem\`atiques
\\
Universitat Aut\`onoma de Barcelona
\\
08193 Bellaterra (Barcelona), Catalonia.
}
\email{laurapb@mat.uab.cat}

\thanks{All the authors were partially supported by 2017-SGR-0395 (Generalitat de Catalunya). J. M. was also partially supported by MTM2016-75390 (MINECO, Spain). 
L. P. was also partially supported by MTM-2016-77635-P (MINECO, Spain). }

\begin{abstract}
We construct an example of a purely unrectifiable measure $\mu$ in $\R^d$ for which the singular integrals associated to the kernels $\displaystyle{K(x)=\frac{P_{2k+1}(x)}{|x|^{2k+d}}}$, with  $k\geq 1$ and $P_{2k+1}$ a homogeneous harmonic polynomial of degree $2k+1$, are bounded in $L^2(\mu)$. This contrasts starkly with the results concerning the Riesz kernel $\displaystyle{\frac{x}{|x|^{d}}}$ in $\R^d$.
\end{abstract}

 \maketitle

\section{Introduction}
The purpose of this note is to give an example of a purely unrectifiable measure $\mu$ in $\R^d$ for which some singular integrals are bounded in $L^2(\mu)$. Let $\mu$ be a finite measure and consider the singular integral operator $T$ associated with the kernel $K:\R^d\setminus\{0\}\rightarrow\R^d$, so that $$T\mu(x)=\int K(x-y)d\mu(y),$$ when $x$ is away from $\mathrm{supp}(\mu)$.   
Given a function $f\in L^1_{loc}(\mu)$,
we set also
\begin{equation}\label{eq:Tfdef}
T_\mu f(x) = T(f\,\mu)(x) = \int K(x-y) f(y)\,d\mu(y),
\end{equation}
and, for $\varepsilon>0$, we consider the $\varepsilon$-truncated version
$$
T_{\varepsilon}\mu(x) = \int_{|x-y|>\varepsilon} K(x-y) \,d\mu(y).
$$
We also write $T_{\mu,\varepsilon} f(x) = T_\varepsilon (f\mu)(x)$. We say that the operator $T_\mu$ is bounded in $L^2(\mu)$ if the operators $T_{\mu,\varepsilon}$ are bounded in $L^2(\mu)$ uniformly on $\varepsilon>0$.

There is a well-known problem in harmonic analysis, the David-Semmes problem, that deals with the connection between singular integral operators and rectifiability.  Recall that a set $E\subset \R^d$ is called $n$-rectifiable if there are Lipschitz maps $f_i:\mathbb{R}^{n}\to\R^d$, $i=1,2,\ldots$, such that
\begin{equation}
\mathcal{H}^n \Big( E\setminus\bigcup_i f_i(\mathbb{R}^n)\Big)=0.
\end{equation}A set $F\subset\R^d$ is called purely $n$-unrectifiable if $\mathcal{H}^n(F\cap E)=0$ for every $n$-rectifiable set $E$. As for sets, one can define a notion of rectifiabilty for measures: a measure $\mu$ is said to be $n$-rectifiable if it vanishes outside an $n$-rectifiable set $E\subset\R^d$ and, moreover, it is absolutely continuous with respect to $\mathcal{H}^n|_{E}$ (the $n$-dimensional Hausdorff measure restricted to the set $E$).  A measure $\mu$ is said to have $(d-1)$-growth if there exists $C>0$ with $\mu(B(x,r))\leq Cr^{d-1}$ for all closed balls $B(x,r)\subset\R^d$.

In what follows, $\mu$ will be a $(d-1)$-dimensional measure, that is, a finite measure with $\mathcal{H}^{d-1}(\supp\mu)<\infty$ and $(d-1)$-growth.
This paper is concerned with the David-Semmes problem, more concretely with the problem of {\bf characterizing the kernels $K$ such that the boundedness of $T_\mu$ in $L^2(\mu)$ implies the $(d-1)-$rectifiability of the measure $\mu$}. 

In 1999, David and L\'eger \cite{leger} showed that the Cauchy kernel $K(z)=\frac 1 z$, $z\in\mathbb{C}\setminus\{0\}$,  is one of the kernels for which the property in bold holds (for $d=2$), and it even holds for its coordinate parts $x/|z|^2$ and $y/|z|^2$,  $z=x+iy\notin\C\setminus\{0\}$. This was proven by using the connection between
Menger curvature and a special subtle positivity property of the  Cauchy kernel (see also \cite{mmv}) . In \cite{cmpt} the result is extended to the kernels $x^{2n-1}/|z|^{2n}$, $n\in\N$, so there are other examples of $1$-dimensional homogeneous convolution kernels whose $L^2(\mu)-$boundedeness also implies the rectifiability of $\mu$, again because of the symmetrization method (see also \cite{chousionisprat} for the extension of the result to any dimension $d$). It is also worth mentioning the paper \cite{chunaev} where other kernels with the above-mentioned property are given.

The case in the David-Semmes problem when $K$ is the codimension $1$ Riesz kernel $K(x)=x/|x|^d$, $x\in\R^d\setminus\{0\}$, was solved by Nazarov, Tolsa and Volberg (see  \cite{NTV_acta} and \cite{NTV_publ}) by different methods relying on the harmonicity of the kernel. The analogous result for dimensions $n\in [2,d-2]$ in $\R^d$ remains still  open. 

The David-Semmes problem also makes sense for solutions of elliptic equations and for the elliptic measure. Very recently Prat, Puliatti and Tolsa, \cite{ppt},  extended  the solution of the David-Semmes problem to gradients of single layer potentials, which are the analogues of the Riesz transform in the context of elliptic PDE's.  

On the other hand, in 2001, Huovinen \cite{huo2} gave an example of a purely unrectifiable Ahlfors-David regular set $E$ for which the singular integral associated to the kernel $K(z)=\frac{xy^2}{|z|^2}$, $z=x+iy \in\mathbb{C}\setminus\{0\},$ is $L^2(\mathcal H^1_{|E})-$bounded.  He also proved that the principal values of the associated singular integral operator exist $\mathcal H^1_{|E}-$almost everywhere. In 2013,  Jaye  and Nazarov \cite{jayenazarov} showed that for the kernel $K(z)=\frac{\bar z}{z^2}, z\in\mathbb{C}\setminus\{0\}$, the above-mentioned in bold property does not hold either. Moreover, for the measure $\mu$ constructed in \cite{jayenazarov}, they showed that $T_\mu1$ fails to exist
in the sense of principal value $\mu$-almost everywhere. 

Very recently, in \cite{jayemerchan}, sharp sufficient conditions on a (locally
finite, non-negative Borel) measure $\mu$ are given, that ensure the existence of a Calder\'on-Zygmund operator in the principal value sense, provided that the operator is $L^2(\mu)-$bounded.

To state our main result,  let $P_{2k+1}$ be a homogeneous harmonic polynomial of degree $2k+1\geq 3$ in $\R^d$. In this paper we will consider the family of kernels 
\begin{equation}\label{kerneldef}
K(x)=\frac{P_{2k+1}(x)}{|x|^{d+2k}},\;\;x\in\R^d\setminus\{0\}.
\end{equation}
So, from now on, the kernel $K$ will be fixed as in \eqref{kerneldef} and $T_\mu$ will denote its associated operator. 
Taking into account that the class of kernels $K$ for which the boundedness of the operator $T_\mu$ in $L^2(\mu)$ implies the rectifiability of $\mu$ does not change if one replaces the $L^2(\mu)$-condition by  the finiteness of $\|T_\mu(1)\|_{L^\infty(\R^d\setminus\supp(\mu))}<\infty$ (see \cite{ntv} for example), we can now state the main result of this paper. It reads as follows:
 
 \begin{teo}\label{maind}
There exists a $(d-1)$-dimensional purely unrectifiable probability measure $\mu$ with $\|T_{\mu}(1)\|_{L^{\infty}(\R^d\setminus\supp(\mu))}<\infty$.
\end{teo}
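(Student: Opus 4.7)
My plan follows the general philosophy of Huovinen's and Jaye--Nazarov's counterexample constructions, adapted to the harmonic structure of $P_{2k+1}$. I would build $\mu$ as the natural self-similar probability measure on a Cantor-type set $E=\bigcap_n E_n\subset\R^d$, where $E_0$ is a cube and $E_{n+1}$ is obtained from $E_n$ by replacing every cube $Q$ at level $n$ with $M$ disjoint congruent children cubes of side $r\,\ell(Q)$. Choosing $r=M^{-1/(d-1)}$ makes the limit $\mu=\lim_n\mu_n$ a probability measure with $(d-1)$-growth. The only real freedom lies in the geometric arrangement of the $M$ children inside each parent, and this is where the kernel enters.

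I would choose the children centers $z_1,\dots,z_M$ of each parent $Q$ so that (i) they are mutually $c\,\ell(Q)$-separated, and (ii) the configuration is symmetric enough around the parent center $z_Q$ to produce the ``discrete mean-value'' estimate
\begin{equation*}
\Bigl|\frac{1}{M}\sum_{j=1}^M K(y-z_j)-K(y-z_Q)\Bigr|\;\le\; C\,\ell(Q)^{2k+2}\big/|y-z_Q|^{d+2k+1}
\end{equation*}
for every $y$ with $|y-z_Q|\gtrsim\ell(Q)$. Property (ii) is specific to kernels of the form $K=P_{2k+1}/|x|^{d+2k}$ and exploits the homogeneity of $K$, the oddness of $P_{2k+1}$, and the extra multipole cancellation carried by harmonic polynomials of odd degree; a natural recipe is to take $\{z_j\}$ invariant under a finite group of rotations and reflections compatible with the symmetries of $P_{2k+1}$, so that enough low-order multipoles against $K$ are annihilated.

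With these ingredients I would bound $T_\mu 1(x)$ for $x\notin\supp\mu$ by the telescoping identity $T_\mu 1=T_{\mu_0}1+\sum_n\bigl(T_{\mu_{n+1}}1-T_{\mu_n}1\bigr)$. Each increment is a sum, over generation-$n$ parents $Q$, of brackets $\mu(Q)\bigl[\tfrac{1}{M}\sum_j K(x-z_{Q_j})-K(x-z_Q)\bigr]$; by (ii) this is bounded by $C\,\mu(Q)\,\ell(Q)^{2k+2}/\dist(x,Q)^{d+2k+1}$ for the parents far from $x$, and using the $(d-1)$-growth of $\mu$ the resulting sum collapses to a geometric series in $r^n$ that converges uniformly in $x$. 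The $O(1)$ parents at each scale that are close to $x$ are handled directly from the separation (i) and the symmetry of the configuration, contributing a bounded boundary piece.

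Pure $(d-1)$-unrectifiability of $\mu$ would then follow from (i) and the self-similarity via a standard tangent measure argument: at $\mu$-almost every point every tangent measure is a rescaled copy of the same Cantor-like object, which admits no preferred $(d-1)$-direction and therefore no approximate tangent $(d-1)$-plane. The main obstacle is (ii): producing, for every harmonic polynomial $P_{2k+1}$ of odd degree $\ge 3$ and every ambient dimension $d\ge 2$, a symmetric and well-separated configuration of $M$ points that delivers the full $\ell(Q)^{2k+2}$-order cancellation while also packing disjointly at ratio $r=M^{-1/(d-1)}$. Balancing the symmetry needed to kill the relevant multipoles of $K$ against the separation and packing constraints is the delicate part; once such a configuration is exhibited, the multiscale telescoping estimate is routine.
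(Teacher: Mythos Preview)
Your telescoping argument has a genuine gap: it does not converge uniformly in $x$. Even granting the discrete mean-value estimate (ii) with the full $\ell(Q)^{2k+2}$ gain, that estimate only controls the \emph{far} parents. The near parents you dismiss as ``a bounded boundary piece'' are exactly the problem. For each level $n$ with $r^n\gtrsim\varepsilon:=\dist(x,\supp\mu)$ there is a generation-$n$ parent $Q$ at distance $\sim r^n$ from $x$, and for it the bracket $\tfrac1M\sum_jK(x-z_{Q_j})-K(x-z_Q)$ has size $\sim r^{-n(d-1)}$, so $\mu(Q)\cdot[\text{bracket}]\sim 1$. There are $\sim\log(1/\varepsilon)/\log(1/r)$ such levels, and their total contribution blows up as $x\to\supp\mu$. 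The far-field gain does not help: summing $\mu(Q)\,\ell(Q)^{2k+2}/|x-z_Q|^{d+2k+1}$ over all generation-$n$ parents via $(d-1)$-growth and dyadic annuli gives a constant \emph{independent of $n$}, again not summable over $n$. In a strictly self-similar set the dominant contribution at every generation comes from the parent at distance comparable to its own scale, and no finite-order multipole cancellation removes that. (Note also that (ii) itself is not obtainable from rotational symmetry alone: $\Delta K=-2k(d+2k)P_{2k+1}/|x|^{d+2k+2}\neq0$ for $k\ge1$, so an isotropic second moment does not vanish against $D^2K$; killing the even moments through order $2k$ outright would force all $z_j=z_Q$.)

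The paper's route differs in two essential ways. First, it abandons self-similarity: the ratios $r_{k+1}/r_k$ tend to $0$ fast enough that $\sum_k(r_k/r_{k-1})^{(d-1)/d}<\infty$. Second, the intermediate approximants $\mu^k$ are not discrete but normalized $d$-dimensional Lebesgue measure on unions of balls, and the decisive input is the \emph{reflectionless} identity $\int_B K(x-y)\,dm_d(y)=0$ for every $x\in B$. This is where the harmonicity of $P_{2k+1}$ actually enters: from $\widehat K(\xi)=cP_{2k+1}(\xi)/|\xi|^{2k+2}$ one reduces to $P_{2k+1}(\partial)(|x|^{2j})=0$ for $1\le j\le k+1$, which holds because $P_{2k+1}$ is harmonic of degree $>2j$ and fails when $k=0$. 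The reflectionless identity kills the near-field at the parent scale \emph{exactly}, leaving only thin annular errors controlled by the summable ratios. The missing idea in your plan is therefore not a clever discrete configuration but a continuous cancellation mechanism for the near field, supplied by Lebesgue measure on balls together with the rapidly shrinking geometry, rather than by any point symmetry.
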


It is worth pointing out that if one takes a homogeneous polynomial $Q_{2n+1}$ of degree $2n+1$, $n\in\N$, and decomposes it into spherical harmonics, then one gets
\begin{equation}\label{decomp}
\frac{Q_{2n+1}(x)}{|x|^{d+2n}}=\frac{P_{2n+1}(x)+P_{2n-1}|x|^2+\cdots+P_1(x)|x|^{2n}}{|x|^{d+2n}}=\sum_{k=0}^n\frac{P_{2k+1}(x)}{|x|^{d+2k}},
\end{equation}
with $P_{2k+1}$ being homogeneous harmonic polynomials of degree $2k+1$. Notice that if the homogeneous polynomial $Q_{2n+1}$ we consider has $P_1\equiv 0$ in the decomposition \eqref{decomp}, then theorem \ref{maind} applies also to the operator associated to the kernel $\displaystyle{\frac{Q_{2n+1}(x)}{|x|^{d+2n}},\;\;x\in\R^d\setminus\{0\}}$.
Observe that, in dimension 2, for example, due to the result in \cite{cmpt}, the $L^2(\mu)-$boundedness of the singular integral operator associated to the kernel $x^3/|z|^4$, $z\in\C\setminus\{0\}$, implies the rectifiability of the measure $\mu$. On the other hand, this kernel $x^3/|z|^4$ is not an admissible kernel for theorem \ref{maind} because 
$$4\frac{x^3}{|z|^4}=\frac{x^3-3xy^2}{|z|^4}+\frac{3x}{|z|^2},$$
which means that in the decomposition \eqref{decomp}  the term $P_1$ is not zero. \newline

The paper is organized as follows, the next section is devoted to the proof of two lemmata that turn to be very useful for the construction. In section 3 we construct the purely unrectifiable Cantor type set and the measure $\mu$. This construction and the structure of the proof follow the paper \cite{jayenazarov}, but we have extended it to our family of $(d-1)-$dimensional  kernels in $\R^d$. In section 4, we finish the proof of theorem \ref{maind}. We include an appendix with a different proof of the reflectionless property (i.e. lemma \ref{reflectionless}) in dimension 2. The proof in the appendix is a computation based on lemma 3.1 in \cite{jayenazarov} but adapted to our family of kernels, while the proof of lemma \ref{reflectionless} in section \ref{section1} uses the Fourier transform and holds for any dimension $d$. 


\section{Two useful lemmata}\label{section1}
We first introduce some notation. Let $m_d$ denote the $d$-dimensional Lebesgue measure, normalized so that $m_d(B(0,1))=1$.
Then, if $\kappa_d=\pi^{d/2}/\Gamma(\frac d 2+1)$, a $d$-dimensional cube $Q\subset\R^d$ of sidelength $\ell(Q)=\sqrt[d]{\kappa_d}$ has $m_d(Q)=1$. 

In this section, we will consider the kernel $\displaystyle{K(x)=\frac{P_{2k+1}(x)}{|x|^{d+2k}}}$ for $x\in\R^d\setminus\{0\}$ and $k\geq 1$, where $P_{2k+1}$ is a homogeneous harmonic polynomial of degree $2k+1$. We will show a reflectionless property of the measure $m_d$ and our kernel $K$. It reads as follows. 

\begin{lemm}\label{reflectionless}
Let $x_0\in\R^d$, $r>0$. For any $x\in B(x_0,r)$, $$\int_{B(x_0,r)}K(x-y)dm_d(y)=0.$$
 \end{lemm}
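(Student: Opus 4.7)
The plan is to normalize the geometry, then represent the kernel $K$ as a high-order constant-coefficient differential operator applied to a radial Riesz-type kernel, so that convolving with the indicator of the unit ball collapses to a derivative of an explicit radial function. First, writing $v=x-x_0$ and using the homogeneity $K(\lambda u)=\lambda^{1-d}K(u)$ (since $P_{2k+1}$ has degree $2k+1$ and the denominator has degree $d+2k$), the change of variables $y=x_0+rw$ reduces the claim to: $(K*\chi)(v)=0$ for every $|v|<1$, where $\chi=\mathbf{1}_{B(0,1)}$.

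Next I would invoke the classical identity
$$P_\ell(\partial)\bigl[f(|x|)\bigr] \;=\; P_\ell(x)\,\Bigl(\tfrac{1}{r}\tfrac{d}{dr}\Bigr)^{\!\ell} f(r)\Big|_{r=|x|}$$
valid for every harmonic polynomial $P_\ell$ of degree $\ell$. Applied with $\ell=2k+1$ and $f(r)=r^{2k+2-d}$ it gives $K = c_{d,k}^{-1}\,P_{2k+1}(\partial)\Phi$, where $\Phi(x)=|x|^{2k+2-d}$ and $c_{d,k}=\prod_{i=0}^{2k}(2k+2-d-2i)$. Because $\Phi$ is, up to a constant, an iterated antiderivative of the Laplace fundamental solution, $\Delta^{k+1}\Phi = c_0\,\delta_0$ for a nonzero constant $c_0$, so $\Delta^{k+1}(\Phi*\chi)(v)=c_0\,\chi(v)=c_0$ throughout $|v|<1$. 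Combined with the radial symmetry of $\Phi*\chi$ (convolution of two radial functions) and its smoothness at the origin, this forces
$$(\Phi*\chi)(v) \;=\; \sum_{j=0}^{k+1} a_j\,|v|^{2j} \qquad\text{on } B(0,1).$$

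Now I apply $P_{2k+1}(\partial)$ term by term. By the same radial identity, $P_{2k+1}(\partial)[r^{2j}] = P_{2k+1}(v)\,\bigl(\tfrac{1}{r}\tfrac{d}{dr}\bigr)^{2k+1} r^{2j}$, and since $\bigl(\tfrac{1}{r}\tfrac{d}{dr}\bigr)^{m} r^{2j}$ already collapses to a constant at $m=j$ and vanishes for $m>j$, every term with $j\le k+1<2k+1$ (which holds because $k\ge 1$) is annihilated. Hence $P_{2k+1}(\partial)(\Phi*\chi)\equiv 0$ on $B(0,1)$, and therefore $(K*\chi)(v)=0$ there, which is the claim.

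The main obstacle I foresee is the exceptional dimensions where $c_{d,k}=0$ (namely $d$ even with $d\le 2k+2$): there the representation of $K$ above is degenerate and $\Phi$ must be replaced by $|x|^{2k+2-d}\log|x|$, after which an L'Hôpital-type computation recovers $K$ as a derivative, the same polyharmonic identification of $\Phi*\chi$ on $B(0,1)$ still applies, and the final vanishing is unchanged. A dimension-uniform alternative is the Fourier route taken by the authors: Bochner's formula gives $\widehat K(\xi)=\gamma_{d,k}\,P_{2k+1}(\xi)/|\xi|^{2k+2}$, and after polar decomposition Fourier inversion reduces $(K*\chi)(v)$ to a Weber--Schafheitlin Bessel integral whose evaluation for $|v|<1$ carries a factor $1/\Gamma(1-k)$ that vanishes for every $k\ge 1$.
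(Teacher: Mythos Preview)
Your argument is correct in the generic case and shares the paper's three-step skeleton: write $K=P_{2k+1}(\partial)\Psi$ for some radial $\Psi$, show that $\Psi*\chi_B$ is an even polynomial in $|x|$ of degree at most $2k+2$ on $B$, and then check that $P_{2k+1}(\partial)$ annihilates every $|x|^{2j}$ with $j\le k+1$. The difference is in the choice of $\Psi$. You take $\Psi=\Phi=|x|^{2k+2-d}$ directly, which is the cleanest route when $c_{d,k}\ne 0$ but forces you to patch in the logarithmic replacement case by case. The paper instead takes $\Psi=E^{\ast(k+1)}$, the $(k{+}1)$-fold convolution of the Laplace fundamental solution with itself; this is identified via the Fourier transform (and that is the \emph{only} role the Fourier transform plays there --- the paper does not compute any Weber--Schafheitlin integral, so your last sentence mischaracterizes it), and then $E^{\ast(k+1)}*\chi_B$ is shown to be polynomial in $|x|^2$ on $B$ by induction on $k$, using only $\Delta E=\delta_0$ and the maximum principle applied to radial harmonic functions. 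That route is dimension-uniform because in the exceptional dimensions the logarithm is already built into $E$, so the obstacle you flag never arises. For the final vanishing step, your use of the radial identity $P_{2k+1}(\partial)[r^{2j}]=P_{2k+1}(x)\bigl(\tfrac{1}{r}\tfrac{d}{dr}\bigr)^{2k+1}r^{2j}=0$ is in fact more elementary than the paper's treatment, which proves $P_{2k+1}(\partial)|x|^{2j}=0$ through a Fourier-side distributional pairing with $\Delta^{j}\delta_0$.
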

\begin{proof}
 Let $B=B(x_0,r)$. We will show that for any $x\in B$, $\displaystyle{(K*\chi_{B})(x)=0.}$ 
 For this, we claim that for $x\in B$ one can write
 \begin{equation}\label{kernel}
 (K*\chi_B)(x)=c P_{2k+1}(\partial)\left(A_0+A_1|x|^2+\cdots+A_{k+1}|x|^{2k+2}\right),
 \end{equation}
 for some appropriately chosen constants $A_0,\cdots,A_{k+1}$ .
 To show \eqref{kernel} we will compute the Fourier transform of our kernel. It is well known that  (see \cite[p.73 Theorem 5]{stein}) 
 \begin{equation}\label{previous}\widehat{\left(\frac{P_{2k+1}(x)}{|x|^{d+2k}}\right)}(\xi)=c\frac{P_{2k+1}(\xi)}{|\xi|^{2k+2}}=cP_{2k+1}(\xi)\frac1{|\xi|^2}\overset{k+1)}{\cdots}\frac1{|\xi|^2},\end{equation}for some constant $c$. Let 
 \begin{equation}
 E(x)=\left\{
 \begin{array}{l}
 -\frac1{2\pi}\log|x|\;\;\;\;\mbox{for}\; d=2\\\\
 \frac1{d(d-2)\kappa_d}\frac1{|x|^{d-2}}\;\;\;\;\mbox{for}\; d\geq 3
\end{array}\right.                                                                                                                                                                                                                                            \end{equation}be the fundamental solution of the Laplacean, that is $\Delta E=\delta_0$, $\delta_0$ being the Dirac delta at the origin. The expression on the right hand side of \eqref{previous} coincides with the Fourier transform of 
                                                                                                                                                                                \begin{equation}\label{fourier}
c P_{2k+1}(\partial)\left(E*\overset{k+1)}{\cdots}*E\right).
 \end{equation}
 Therefore,  claim \eqref{kernel} will be proved if we can show that for $x\in B$, there exist constants $\tilde A_0,\tilde A_1,\cdots, \tilde A_{k+1}$ such that
 \begin{equation}\label{harmonic}
 \big(E*\overset{k+1)}{\cdots}*E*\chi_B\big)(x)=\tilde A_0+\tilde A_1|x|^2+\cdots+\tilde A_{k+1}|x|^{2k+2}.
 \end{equation}
 We will show \eqref{harmonic} by induction on $k$. Notice that one can choose $A_1$ in order that $$\Delta\left(\big(E*\chi_B\big)(x)-A_1|x|^2\right)=0,\;\;\;x\in B.$$
 Hence, this difference of functions, is a radial harmonic function and by the maximum principle, it is constant. Choosing $A_0$ appropriately, one proves the first step in the induction process, namely that for $x\in B,$ $$\big(E*\chi_B\big)(x)=A_0+A_1|x|^2.$$
 Assume that there exist constants $A_0,\cdots, A_k$ such that for $x\in B,$
 \begin{equation}\label{induction}
 \big(E*\overset{k)}{\cdots}*E*\chi_B\big)(x)= A_0+A_1|x|^2+\cdots+A_{k}|x|^{2k}.
 \end{equation}
 We want to show \eqref{harmonic}. One can easily choose constants $ \tilde A_1,\cdots, \tilde A_{k+1}$ such that for $x\in B,$
  \begin{equation}
  \begin{split}
  \Delta&\left( \big(E*\overset{k+1)}{\cdots}*E*\chi_B\big)(x)-\tilde A_1|x|^2+\cdots-\tilde A_{k+1}|x|^{2k+2}\right)\\&=
  \big(E*\overset{k)}{\cdots}*E*\chi_B\big)(x)-A_0-A_1|x|^2-\cdots-A_{k}|x|^{2k}=0,
  \end{split}
  \end{equation}
where the last equality is due to the induction hypothesis. Applying again the maximum principle to the radial harmonic function $$\big(E*\overset{k+1)}{\cdots}*E*\chi_B\big)(x)-\tilde A_1|x|^2+\cdots-\tilde A_{k+1}|x|^{2k+2},$$
one can see that this function is constant and hence \eqref{harmonic} holds, by choosing $\tilde A_0$ appropriately. Therefore, claim \eqref{kernel} is shown.

 To complete the proof of the lemma, we are only left to show that 
 \begin{equation}\label{nul}
 P_{2k+1}(\partial)(|x|^{2j})=0,\;\;\;1\leq j\leq k+1.
 \end{equation}
 In fact, \eqref{nul} holds for $1\leq j\leq 2k$. One can find a proof of this fact in \cite[p. 1437]{mov}, but we include it here for completeness.
 Taking the Fourier transform in expression \eqref{nul}, we obtain $$\widehat{P_{2k+1}(\partial)(|x|^{2j})}(\xi)=c_jP_{2k+1}(\xi)\Delta^j\delta_0,$$
 where $c_j$ is a constant depending on $j$. Take a test function $\varphi$. Since $P_{2k+1}$ is harmonic, 
 \begin{equation}
 \begin{split}
  \langle P_{2k+1}(\xi)\Delta^j\delta_0,\varphi(\xi)\rangle&=\langle\Delta^{j-1}\delta_0,\Delta(P_{2k+1}(\xi)\varphi(\xi))\rangle\\&=\langle\Delta^{j-1}\delta_0,2\nabla P_{2k+1}(\xi)\cdot\nabla\varphi(\xi)+P_{2k+1}(\xi)\Delta\varphi(\xi)\rangle\\&=\langle\delta_0,\mathcal D(\xi)\rangle=\mathcal D(0),
  \end{split}
 \end{equation}
where $\mathcal D$ is a linear combination of products of the form $\partial^\alpha\varphi(\xi)\partial^\beta P_{2k+1}(\xi)$, with multi-indices $\beta$ of length $|\beta|\leq j\leq 2k$. So, $\partial^\beta P_{2k+1}$ is a homogeneous polynomial of degree at least $2k+1-j\geq 1$. Hence $\partial^\beta P_{2k+1}(0)=0$, which implies $\mathcal D(0)=0$ and completes the proof of \eqref{nul}.

\end{proof}

The second important lemma for the proof of theorem \ref{maind} is the following one.

\begin{lemm}\label{lemmabo}
 Let $x_0\in\R^d$. Fix $r,R\in(0,1]$ with $r$ much smaller than $R$. Let $Q\subset\R^d$ be a cube centered at $x_0$ with sidelength $\ell(Q)=\sqrt[d]{\kappa_d Rr^{d-1}}$ and let $B=B(x_0,2r)$. Assume that $\nu_1$ and $\nu_2$ are Borel measures with $\supp\nu_1\subset Q$, $\supp\nu_2\subset B$ and $\nu_1(\R^d)=\nu_2(\R^d)$. Then, for any $x\in\R^d$ with $\dist(x,Q)\geq\sqrt[d]{\kappa_dr^{d-1}R}/8$, we have
 \begin{equation}
  \begin{split}
   \left|\int_Q K(x-\xi)d\nu_1(\xi)-\int_B K(x-\xi)d\nu_2(\xi)\right|\leq C\left(\sqrt[d]{Rr^{d-1}}\int_Q\frac{d\nu_ 1(\xi)}{|x-\xi|^d}+r\int_B\frac{d\nu_2(\xi)}{|x-\xi|^d}\right),
  \end{split}
\end{equation}
for some constant $C=C(k,d)$.
\end{lemm}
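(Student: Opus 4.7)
The plan is to exploit the common total mass $\nu_1(\R^d)=\nu_2(\R^d)$ to subtract off a constant (the value of $K$ at a convenient reference point) and then estimate each integrand by a mean-value argument. Take $x_0$ itself as the reference point. Since $\nu_1(\R^d)=\nu_2(\R^d)$, the constant $K(x-x_0)$ contributes the same amount to both integrals, so
$$
\int_Q K(x-\xi)\,d\nu_1(\xi)-\int_B K(x-\xi)\,d\nu_2(\xi)=\int_Q[K(x-\xi)-K(x-x_0)]\,d\nu_1(\xi)-\int_B[K(x-\xi)-K(x-x_0)]\,d\nu_2(\xi).
$$
It therefore suffices to bound each of the two integrals separately by the corresponding term on the right-hand side of the claimed inequality.

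Next, I would use the fact that $K$ is a $C^\infty$ function away from the origin, homogeneous of degree $1-d$ (since $\deg P_{2k+1}=2k+1$ and the denominator has degree $d+2k$), so $|\nabla K(z)|\leq C_{k,d}|z|^{-d}$ for all $z\neq 0$. Applying the mean value theorem along the segment joining $x_0$ and $\xi$ gives
$$
|K(x-\xi)-K(x-x_0)|\leq |\xi-x_0|\sup_{y\in[x_0,\xi]}|\nabla K(x-y)|\leq C_{k,d}\,|\xi-x_0|\sup_{y\in[x_0,\xi]}\frac{1}{|x-y|^d}.
$$
For $\xi\in Q$ one has $|\xi-x_0|\leq \tfrac{\sqrt d}{2}\ell(Q)\lesssim \sqrt[d]{Rr^{d-1}}$, while for $\xi\in B$ one has $|\xi-x_0|\leq 2r$. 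This accounts for the two prefactors $\sqrt[d]{Rr^{d-1}}$ and $r$ in the statement.

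The remaining task, which I view as the real content, is the geometric comparison $|x-y|\asymp |x-\xi|$ for $y$ on the relevant segment, so that the supremum of $|\nabla K(x-y)|$ can be replaced by $C|x-\xi|^{-d}$ and absorbed into the integrals $\int |x-\xi|^{-d}\,d\nu_i(\xi)$. The hypothesis $r\ll R$ ensures $4r\leq \ell(Q)$, hence $B\subset Q$, so in both cases the segment $[x_0,\xi]$ lies inside $Q$. The standoff hypothesis $\dist(x,Q)\geq \ell(Q)/8$ then gives, for any $y\in Q$, the bound $|x-y|\geq \dist(x,Q)\geq \ell(Q)/8$, while for any $\xi\in Q$,
$$
|x-\xi|\leq \dist(x,Q)+\diam(Q)\leq (1+8\sqrt d)\dist(x,Q)\leq (1+8\sqrt d)|x-y|,
$$
so $|x-y|\geq c_d|x-\xi|$ uniformly. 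Inserting this into the mean-value estimates and integrating against $\nu_1$ over $Q$ and $\nu_2$ over $B$ yields the desired inequality with a constant $C=C(k,d)$. The only subtle point is keeping track of the constants so that the same geometric comparison works for both measures; since $B\subset Q$, the cube standoff dominates and the argument is uniform.
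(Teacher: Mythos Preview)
Your proof is correct and follows essentially the same route as the paper: subtract the constant $K(x-x_0)$ using the equality of total masses, then use a mean-value estimate on $K$ together with the geometric comparison $|x-y|\asymp|x-\xi|$ coming from the standoff hypothesis $\dist(x,Q)\ge\ell(Q)/8$. The paper phrases the pointwise estimate by expanding $K(x-\xi)-K(x-x_0)$ algebraically (splitting numerator and denominator), whereas you invoke the homogeneity bound $|\nabla K(z)|\le C_{k,d}|z|^{-d}$ directly; these are equivalent formulations of the standard Calder\'on--Zygmund smoothness estimate.
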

\begin{proof}
Without loss of generality set $x=0$. We claim that  it is enough to show that for any $\xi\in Q$ and $x$ such that $\dist(x,Q)\geq\sqrt[d]{\kappa_dr^{d-1}R}/8$, we have
\begin{equation}\label{claim2}
|K(x-\xi)-K(x)|\leq C\frac{|\xi|}{|x-\xi|^d},
\end{equation}
for some constant $C$ depending on $k$ and $d$. In what follows, the constants $C$ may depend on the fixed parameters $k$ and $d$, although we don't write this dependence. 
Assume \eqref{claim2} holds. Using that $\nu_1(\R^d)=\nu_2(\R^d)$ and plugging the claimed estimate into the integral we get the desired inequality as follows:
\begin{equation}
  \begin{split}
   \left|\int_Q K(x-\xi)d\nu_1(\xi)-\int_B K(x-\xi)d\nu_2(\xi)\right|&= \left|\int_Q \left(K(x-\xi)-K(x)\right)d(\nu_1-\nu_2)(\xi)\right|\\\leq C&\int_Q\frac{|\xi|}{|x-\xi|^d}d\nu_1(\xi)+
   C\int_B\frac{|\xi|}{|x-\xi|^d}d\nu_2(\xi)\\\leq C&\int_Q\frac{\sqrt[d]{Rr^{d-1}}}{|x-\xi|^d}d\nu_ 1(\xi)+C \int_B\frac{r}{|x-\xi|^d}d\nu_2(\xi).
  \end{split}
\end{equation}

To show claim \eqref{claim2}, write
\begin{equation}
\begin{split}
 |K(x-\xi)&-K(x)|=\frac{\left|P_{2k+1}(x-\xi)|x|^{d+2k}-P_{2k+1}(x)|x-\xi|^{d+2k}\right|}{|x-\xi|^{d+2k}|x|^{d+2k}}\\&
 \leq\frac{\left|P_{2k+1}(x-\xi)-P_{2k+1}(x)\right|}{|x-\xi|^{d+2k}}+\frac{\left|P_{2k+1}(x)\right|\left||x-\xi|^{d+2k}-|x|^{d+2k}\right|}{|x|^{d+2k}|x-\xi|^{d+2k}}\\&\lesssim\frac{|\xi|}{|x-\xi|^d},
\end{split}
\end{equation}
by the mean value theorem and the fact that $|x|\approx |x-\xi|$.
To see this, notice that $|x-\xi|\leq |x|+|\xi|\leq 2|x|$ and
$$|x-\xi|\geq\dist(x,Q)\geq\sqrt[d]{\kappa_dr^{d-1}R}/8$$
Therefore, $$|x|\leq |x-\xi|+|\xi|\leq |x-\xi|+\frac{\sqrt{d}}{2}\sqrt[d]{\kappa_dr^{d-1}R}\leq |x-\xi|\big(1+4\sqrt{d}\big)$$

This shows \eqref{claim2} and the lemma.\end{proof}

\section{Construction of the purely unrectifiable Cantor type set and the measure $\mu$}
We need a lemma telling us how to pack cubes into balls. Let $r,\;R\in(0,\infty)$ such that $\frac{R}{r}\in\N$.
\begin{lemm}\label{dpacking}
One can pack $\left(\frac{R}{r}\right)^{d-1}$ pairwise essentially disjoint cubes of side length $\sqrt[d]{\kappa_dr^{d-1}R}$ into a ball of radius $R\left(1+\sqrt{d}\sqrt[d]{\kappa_d\frac{r^{d-1}}{R^{d-1}}}\right)$. 
\end{lemm}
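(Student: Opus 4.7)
The plan is to realize the packing by intersecting a standard axis-aligned grid of cubes of side $s=\sqrt[d]{\kappa_d r^{d-1}R}$ with a ball slightly larger than $B(0,R)$, and then count via a volume argument, which matches the bound in the statement exactly.

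First, I would set $s = \sqrt[d]{\kappa_d r^{d-1}R}$ and $R'=R+\sqrt{d}\,s$, noting that $R' = R\bigl(1+\sqrt{d}\sqrt[d]{\kappa_d r^{d-1}/R^{d-1}}\bigr)$, precisely the radius appearing in the statement. Partition $\R^d$ by the standard grid $\mathcal{G}$ of essentially disjoint closed cubes of side $s$. Let $\mathcal{F} \subset \mathcal{G}$ be the subfamily of grid cubes meeting $B(0,R)$. The key geometric observation is that any cube $Q \in \mathcal{F}$ is contained in $B(0,R')$: if $p \in Q \cap B(0,R)$ and $q \in Q$, then $|q|\le |p|+|q-p|\le R+\operatorname{diam}(Q)=R+\sqrt{d}\,s=R'$.

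Next I would produce the count. Since $\bigcup_{Q \in \mathcal{F}} Q \supset B(0,R)$,
\begin{equation*}
\#\mathcal{F}\cdot s^{d} \;=\; m_d\Bigl(\bigcup_{Q\in\mathcal{F}}Q\Bigr)\;\ge\; m_d(B(0,R)) \;=\; \kappa_d R^d,
\end{equation*}
so that
\begin{equation*}
\#\mathcal{F}\;\ge\;\frac{\kappa_d R^d}{\kappa_d r^{d-1}R}\;=\;\Bigl(\frac{R}{r}\Bigr)^{d-1}.
\end{equation*}
Because $R/r \in \N$, the right-hand side is an integer, and we may extract exactly $(R/r)^{d-1}$ pairwise essentially disjoint cubes of $\mathcal{F}$, all contained in the ball $B(0,R')$ by the inclusion above. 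Translating the configuration gives the lemma for a ball centered at any point.

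I do not anticipate a serious obstacle here: the only delicate points are (i) that the side length $s$ is tuned precisely so that the ratio of volumes equals $(R/r)^{d-1}$, which is what makes the packing count work out tight, and (ii) that the geometric cushion $\sqrt{d}\,s$ needed to contain the grid cubes coincides exactly with the excess radius prescribed by the statement, so the bound is sharp up to the choice of cubic packing and no further loss enters.
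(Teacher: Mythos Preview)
Your proof is correct and follows essentially the same approach as the paper: intersect the standard grid of mesh size $s=\sqrt[d]{\kappa_d r^{d-1}R}$ with $B(0,R)$, use the diameter bound $\sqrt{d}\,s$ to show all such cubes sit inside the enlarged ball, and run the identical volume comparison to obtain $\#\mathcal{F}\ge (R/r)^{d-1}$. The only cosmetic difference is that the paper works with the normalized measure $m_d$ (so that $m_d(B(0,R))=R^d$), which cancels the factor $\kappa_d$ you carry explicitly.
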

\begin{proof}
Without loss of generality assume that the ball is centered at the origin. Consider the cubic grid of mesh size $\sqrt[d]{\kappa_dr^{d-1}R}$. Let $Q_1,\cdots,Q_M$ be the cubes intersecting $B(0,R)$. Since the main diagonal of each of these cubes measures $\sqrt{d}\sqrt[d]{\kappa_dr^{d-1}R}$, we have $$R+\sqrt{d}\sqrt[d]{\kappa_dr^{d-1}R}=R\left(1+\sqrt{d}\sqrt[d]{\kappa_d}\sqrt[d]{\frac{r^{d-1}}{R^{d-1}}}\right).$$ Hence, these cubes are clearly contained in the ball $B\left(0,R(1+\sqrt d\sqrt[d]{\kappa_d\frac{r^{d-1}}{R^{d-1}}})\right)$.  Since
$$Mr^{d-1}R=\sum_{j=1}^Mm_d(Q_j)>m_d(B(0,R))=R^d,$$
we have $\displaystyle{M>\displaystyle{\frac{R^{d-1}}{r^{d-1}}}}$.
\end{proof}
The construction of the Cantor set follows the paper \cite{jayenazarov}, where they do the construction on the plane. 
We consider a sequence $\{r_k\}_{k\geq 0}$ that tends to zero quickly and such that:
\begin{enumerate}
 \item $r_0=1$.
 \item $\displaystyle{r_{k+1}<\frac{r_k}{B}}$, for some big constant $B$ (depending on $d$)  to be chosen later.
 \item $\displaystyle{\frac{r_k}{r_{k+1}}\in\N}$ and $\displaystyle{\frac 1{r_k}\in\N}$.
\end{enumerate}
Set $\widetilde B_1^{0}=B(0,1)$. We will construct the set iteratively. Given the $k-$th generation of $1/r_k^{d-1}$ balls $\widetilde B_j^{k}$ of radius $r_k$, we proceed to the $(k+1)$-st generation as follows: for each ball $\widetilde B_j^{k}$ we apply Lemma \ref{dpacking} with $R=r_k$ and $r=r_{k+1}$, so we find 
$(\frac{r_{k}}{r_{k+1}})^{d-1}$ pairwise essentially disjoint cubes $Q_{\ell}^{k+1}$ 
of sidelength $\sqrt[d]{\kappa_d r_{k+1}^{d-1}r_{k}}$ contained in the ball $\big(1+A\sqrt[d]{\frac{r_{k+1}^{d-1}}{r_k^{d-1}}}\big)\widetilde B_j^{k}$, here $A=\sqrt d\sqrt[d]\kappa_d$. Set $\widetilde B_{\ell}^{k+1}=B(z_{\ell}^{(k+1)}, r_{k+1})$, where $z_{\ell}^{(k+1)}$ denotes the center of the cube $Q_{\ell}^{k+1}$. We carry out this process for each ball $\widetilde B_j^{k}$ from the $k$-th generation. In total, we get $1/r_{k+1}^{d-1}$ balls $B_{\ell}^{k+1}$ in the $(k+1)$-st level. We do this for each $k\in\N$.  Set  $\displaystyle{\delta_{k+1}=A\sqrt[d]{\frac{r_{k+1}^{d-1}}{r_k^{d-1}}}}$ and 
$$B_j^{k}=(1+\delta_{k+1})\widetilde B_j^{k}\;\;\mbox{  and } \;\;E^k=\bigcup_{j\geq 1}B_j^k.$$
Notice that: \begin{enumerate}
\item For all $k\geq 0,$ $\displaystyle{\cup_{\ell}Q_{\ell}^{k+1}\subset E^k}$.
\item For each $k\geq 1$, $B_j^k\subset Q_j^k$.
\item For each $k\geq 1$, $\dist(B_j^{k},\partial Q_j^k)\geq\sqrt[d]{\kappa_dr_k^{d-1} r_{k-1}}/4$, choosing $B$ appropriately (depending on $d$).

\begin{equation}
\begin{split}
\dist(B_j^{k},\partial Q_j^k)&=\frac{\sqrt[d]{\kappa_d r_k^{d-1}r_ {k-1}}}2-(1+\delta_{k+1})r_k\\&=\sqrt[d]{r_k^{d-1}r_{k-1}}\left(\frac{\sqrt[d]{\kappa_d}}{2}-\sqrt[d]{\frac{r_k}{r_{k-1}}}-A\sqrt[d]{\frac{r_{k+1}^{d-1}}{r_k^{d-1}}\frac{r_k}{r_{k-1}}}\right)\\&>\sqrt[d]{r_k^{d-1}r_{k-1}}\left(\frac{\sqrt[d]{\kappa_d}}{2}-\sqrt[d]{\frac 1 B}-\frac A B\right)\geq\frac{\sqrt[d]\kappa_d}{4}\sqrt[d]{r_k^{d-1}r_{k-1}},
\end{split}
\end{equation}
\item For each $k\geq 1$ and $i\neq j$, $\dist(B_j^{k},B_i^{k})\geq\sqrt[d]{\kappa_dr_k^{d-1}r_{k-1}}/2$.
\end{enumerate}

Notice that $E^{k+1}\subset E^k$ for each $k\geq 0$. Now, set $$E=\bigcap_{k\geq 0} E^k.$$
\begin{lemm}
If $\sum_k\delta_k^{\frac{1}{d}}<\infty$, the set $E$ defined above is purely $(d-1)$-unrectifiable.
\end{lemm}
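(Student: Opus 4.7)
My plan is to prove pure $(d-1)$-unrectifiability of $E$ via a lower density argument: I aim to show that $\Theta^{d-1}_\ast(\mathcal{H}^{d-1}|_E,x)=0$ for every $x\in E$, and then conclude from the classical density characterization of rectifiable sets. Indeed, if $E$ contained a $(d-1)$-rectifiable subset $E_r$ with $\mathcal{H}^{d-1}(E_r)>0$, then $\Theta^{d-1}(\mathcal{H}^{d-1}|_{E_r},x)$ would equal $1$ at $\mathcal{H}^{d-1}$-a.e.\ $x\in E_r$, and monotonicity $\mathcal{H}^{d-1}|_E\geq \mathcal{H}^{d-1}|_{E_r}$ would force $\Theta^{d-1}_\ast(\mathcal{H}^{d-1}|_E,x)\geq 1$ at such points, contradicting the uniform vanishing established below.

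The first step is the covering bound $\mathcal{H}^{d-1}(E\cap B_\ell^k)\leq r_k^{d-1}$ for every level-$k$ ball. Since $E\cap B_\ell^k$ is covered, at any finer level $m>k$, by its $(r_k/r_m)^{d-1}$ descendant level-$m$ balls of radius $r_m(1+\delta_{m+1})$, one obtains the Hausdorff premeasure estimate $\mathcal{H}^{d-1}_{2r_m(1+\delta_{m+1})}(E\cap B_\ell^k)\leq r_k^{d-1}(1+\delta_{m+1})^{d-1}$; letting $m\to\infty$ and using $\delta_m\to 0$ (a consequence of $\sum_k\delta_k^{1/d}<\infty$) gives the claim.

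Now fix any $x\in E$ and, for each $k\geq 1$, let $B_{\ell_k}^k$ be the unique level-$k$ ball containing $x$. I would work at the scale $r_k':=s_{k+1}/4$, where $s_{k+1}=\sqrt[d]{\kappa_d r_{k+1}^{d-1}r_k}$ is the level-$(k+1)$ cube sidelength. Property~(4) of the construction yields $\dist(x,B_j^{k+1})\geq \dist(B_{\ell_{k+1}}^{k+1},B_j^{k+1})\geq s_{k+1}/2>r_k'$ for every $j\neq\ell_{k+1}$, so the inclusion $E\subset\bigcup_j B_j^{k+1}$ gives $B(x,r_k')\cap E\subset B_{\ell_{k+1}}^{k+1}\cap E$. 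Combining with the covering bound from the previous step,
\[
\frac{\mathcal{H}^{d-1}(E\cap B(x,r_k'))}{\kappa_{d-1}\,(r_k')^{d-1}}\,\lesssim\, \frac{r_{k+1}^{d-1}}{s_{k+1}^{d-1}}\,=\,\frac{1}{\kappa_d^{(d-1)/d}}\left(\frac{r_{k+1}}{r_k}\right)^{(d-1)/d}.
\]
Because $\sum_k\delta_k^{1/d}<\infty$ forces $\delta_k\to 0$, equivalently $r_{k+1}/r_k\to 0$, the right-hand side tends to $0$ as $k\to\infty$, and hence $\Theta^{d-1}_\ast(\mathcal{H}^{d-1}|_E,x)=0$, as desired.

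The main obstacle is rather mild and consists in the separation step, which reduces immediately to property~(4) of the construction. The hypothesis $\sum_k\delta_k^{1/d}<\infty$ is in fact stronger than what is strictly needed for pure unrectifiability alone (mere summability of $\delta_k$ would suffice, since it already gives $\delta_k\to 0$ and a bounded product $\prod_m(1+\delta_m)$); the sharper summability is reserved for the subsequent construction of the probability measure $\mu$ in section 3 and the verification of $\|T_\mu 1\|_{L^\infty(\R^d\setminus\supp\mu)}<\infty$ in section 4.
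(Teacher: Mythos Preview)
Your proof is correct and takes a genuinely different route from the paper's.  The paper argues directly: given a $(d-1)$-rectifiable set $F$ of finite measure, it covers $F$ by balls of radius $\tfrac18\sqrt[d]{\kappa_d r_k^{d-1}r_{k-1}}$, observes that each such ball can meet at most one level-$k$ ball $B_j^k$ (by the separation property~(4)), and then estimates $\mathcal{H}^{d-1}(E\cap F)$ by the number of covering balls times $r_k^{d-1}$, which tends to~$0$ as $k\to\infty$.  You instead prove the pointwise statement $\Theta^{d-1}_\ast(\mathcal{H}^{d-1}|_E,x)=0$ at every $x\in E$ and then invoke the density theorem for rectifiable sets.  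Both arguments rest on the same two ingredients---the mass bound $\mathcal{H}^{d-1}(E\cap B_j^k)\lesssim r_k^{d-1}$ and the separation property~(4)---but they are assembled differently.  Your approach is cleaner and yields a slightly stronger conclusion (zero lower density everywhere, not just pure unrectifiability), at the cost of importing a nontrivial theorem from geometric measure theory; the paper's approach is more self-contained but leans on an efficient-covering claim for rectifiable sets that it does not justify in detail.  Your closing remark is also accurate: only $\delta_k\to 0$ is needed here, and the stronger summability hypothesis $\sum_k\delta_k^{1/d}<\infty$ is reserved for the boundedness estimate in Section~4.
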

\begin{proof}
We will show that $\mathcal H^{d-1}(E\cap F)=0$ for any $(d-1)$-rectifiable set $F$.  Let $F$ be a $(d-1)$-rectifiable set. Then it can be covered by balls $B_j=B(z_j,\frac{1}{8}\sqrt[d]{\kappa_dr_k^{d-1}r_{k-1}})$, $1\leq j\leq N$, satisfying $$\sum_{j=1}^Nr^{d-1}(B_j)=\sum_{j=1}^N\frac{1}{8^{d-1}}(\kappa_dr_k^{d-1}r_{k-1})^{(d-1)/d}\leq \mathcal H^{d-1}(F),$$ i.e. $N\leq 8^{d-1}\mathcal H^{d-1}(F)/(\kappa_dr_k^{d-1}r_{k-1})^{(d-1)/d}$.

Recall that the balls $B^k_j$ have radius $(1+\delta_{k+1})r_k\leq 2 r_k$, if we choose the constants $A$ and $B$ apropriately. Then, since for each $z\in\R^d$ and $k\geq 1$, $B(z_j,\frac{1}{8}\sqrt[d]{\kappa_dr_k^{d-1}r_{k-1}})$ can intersect at most one of the balls $B^k_j$,
\begin{equation}
 \begin{split}
  {\mathcal H}^{d-1}(E\cap F)&\leq\sum_{j=1}^N{\mathcal H}^{d-1}(E\cap B(z_j,\frac{1}{8}\sqrt[d]{\kappa_dr_k^{d-1}r_{k-1}}))\leq 2^{d-1}\sum_{j=1}^Nr_k^{d-1}=2^{d-1}Nr_k^{d-1}\\&\leq 16^{d-1}\sqrt[d]{\frac{r_k^{d-1}}{\kappa_dr_{k-1}^{d-1}}}\mathcal H^{d-1}(F)=\frac{16^{d-1}}{\sqrt[d]{\kappa_d^2}\sqrt d}\mathcal H^{d-1}(F)\delta_k\longrightarrow 0,
\end{split}
\end{equation}
as $k\rightarrow\infty$, since $\sum_k\delta_k^{\frac{1}{d}}<\infty$.
\end{proof}

We define now the measure $\mu$ as follows. Set $$\displaystyle{\mu^k_j=\frac{1}{r_k}\chi_{\tilde B^k_ j}m_d}\;\;\mbox{ and }\;\;\mu^k=\displaystyle{\sum_j\mu_j^k}.$$ Notice that $\supp(\mu^k)\subset E^k$ and $\mu^k(\R^d)=1$ for all $k$. Hence, there exists a subsequence of $\mu_j^k$ that converges weakly to a measure $\mu$ with $\supp(\mu)\subset E$ and $\mu(\R^d)=1$. From the construction, we can deduce that our measures $\mu^k$ satisfy the following properties:

\begin{enumerate}
\item $\supp(\mu^k)\subset\bigcup_jB_j^m$ if $k\geq m$.
\item $\mu^k(B^m_j)=r_m^{d-1}$ for $k\geq m$.
\item The measure $\mu^k$ has $(d-1)$-growth, that is, there exists $C_0>0$ such that $\mu^k(B(z,r))\leq C_0r^{d-1}$ for any $z\in\R^d$, $r>0$ and $k\geq 0$. 
\end{enumerate}

To show the $(d-1)-$growth, notice that for $r\geq1$, the property is clear because $\mu^k$ is a probability measure. If $0<r<1$, then $r\in(r_{m+1},r_m)$ for some $m\in\N$. In case $m\geq k$, we have $r_m\leq r_k$. Hence, the disc $B(z,r)$ intersects at most one $B^k_j$, therefore
$$\mu^k(B(z,r))=\frac 1{r_k}m_d(B(z,r)\cap\tilde B^k_j)\leq \frac{r^d}{r_k}\leq r^{d-1}.$$
If $m<k$, by property $(4)$ we have $\dist(B^{m+1}_j, B^{m+1}_ i)\geq\sqrt[d]{\kappa_dr_mr_{m+1}^{d-1}}/2$. So, the disc $B(z,r)$ intersects at most $1+C\frac{r^d}{r_mr_{m+1}^{d-1}}$ discs $B^{m+1}_j$, for some constant $C$ depending on $d$. Applying property $(2)$ above, we get
$$\mu^k(B(z,r))=\sum_j\mu^k(B(z,r)\cap B^{m+1}_j)\leq \left(1+C\frac{r^d}{r_mr_{m+1}^{d-1}}\right)r_{m+1}^{d-1}\leq Cr^{d-1}.$$

From property (3) and the weak convergence we deduce that  for any disc $B(z,r)$, $\mu(B(z,r))\leq C_0r^{d-1}$ .

\section{Boundedness of $T_\mu(1)$ out of the support of $\mu$.}

We begin this section with some notation: since each $x\in E^k$ is contained in a unique disc $B^k_j$ and in a unique square $Q^k_j$, we shall denote them by $B^k(x)$ and $Q^k(x)$ respectively. 

The boundedness of $\|T_\mu(1)\|_{L^\infty(\C\setminus \supp(\mu))}$ will follow from the weak convergence of $\mu^k$ to $\mu$ and the following proposition.
\begin{prop}
Suppose that $\dist(x,\supp(\mu))=\varepsilon>0$ and $\sum_{k\geq 1}\delta_k^{\frac{1}{d}}<\infty$. Then for any $m\in\N$ with $\displaystyle{r_m<\frac{\varepsilon}{2d}}$, $$\left|\int_{\C}K(x-\xi)d\mu^m(\xi)\right|\leq C,$$
where $C$ is a constant that depends on the dimension $d$.

\end{prop}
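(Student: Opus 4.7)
The strategy is a multi-scale telescoping argument in the spirit of Jaye–Nazarov \cite{jayenazarov}, now adapted to our $(d-1)$-dimensional setting and the family of kernels \eqref{kerneldef}. Let $k^{*}=k^{*}(x)$ be the largest index such that $x$ lies in some ball $B^{k^{*}}_{j^{*}}$; since $\dist(x,\supp\mu)=\varepsilon>2d\,r_m$ and the balls $B^k_j$ have radii $(1+\delta_{k+1})r_k \leq 2r_k \to 0$, the index $k^{*}$ is finite and $k^{*}<m$. For $k>k^{*}$, the point $x$ lies outside every generation-$k$ ball. The central decomposition is
\begin{equation*}
\int K(x-\xi)\,d\mu^m(\xi) = \int K(x-\xi)\,d\mu^{k^{*}}(\xi) + \sum_{k=k^{*}+1}^{m}\int K(x-\xi)\,d(\mu^k-\mu^{k-1})(\xi),
\end{equation*}
and the two pieces are attacked with Lemmas \ref{reflectionless} and \ref{lemmabo} respectively.

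For the base term at level $k^{*}$, I would split the integral into the ball $\tilde B^{k^{*}}_{j^{*}}$ associated with $x$ and the remaining balls $\tilde B^{k^{*}}_{j}$, $j\neq j^{*}$. By Lemma \ref{reflectionless} the contribution of $\tilde B^{k^{*}}_{j^{*}}$ vanishes when $x\in\tilde B^{k^{*}}_{j^{*}}$; if instead $x$ lies in the thin annulus $B^{k^{*}}_{j^{*}}\setminus\tilde B^{k^{*}}_{j^{*}}$ of width $\delta_{k^{*}+1}r_{k^{*}}$, one adds and subtracts the reflectionless integral and estimates the residual shell contribution directly using $(d-1)$-growth of $\mu^{k^{*}}$ and the crude bound $|K(x-\xi)|\lesssim |x-\xi|^{-(d-1)}$. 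The far balls $\tilde B^{k^{*}}_{j}$ ($j\neq j^{*}$) are separated from $x$ by at least $\sqrt[d]{\kappa_d r_{k^{*}}^{d-1}r_{k^{*}-1}}/2$ by property (4); a standard dyadic summation using $(d-1)$-growth then bounds their total contribution by $O(1)$.

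For each telescoping term at level $k\geq k^{*}+1$, I would group the children $\tilde B^k_\ell$ by their parent $\tilde B^{k-1}_i$ and match them one-by-one with the cubes $Q^k_\ell$ sitting inside $B^{k-1}_i=(1+\delta_k)\tilde B^{k-1}_i$. Up to a small boundary correction (since the $Q^k_\ell$ tile $B^{k-1}_i$ rather than $\tilde B^{k-1}_i$), the restrictions $\mu^k|_{\tilde B^k_\ell}$ and $\mu^{k-1}|_{Q^k_\ell}$ carry the same total mass $r_k^{d-1}$, so Lemma \ref{lemmabo} with $R=r_{k-1}$, $r=r_k$ applies to each pair; the required separation $\dist(x,Q^k_\ell)\gtrsim \sqrt[d]{\kappa_d r_k^{d-1}r_{k-1}}/8$ follows from property (3) of the construction combined with the fact that $x\notin B^k_j$ for $k>k^{*}$. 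Summing the resulting pointwise bound over all children and parents at level $k$ and controlling the auxiliary integrals by layer-cake and $(d-1)$-growth, one obtains a per-level error of order $\delta_k^{1/d}$. Summing in $k$ and invoking the assumption $\sum_k\delta_k^{1/d}<\infty$ then closes the estimate.

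The main obstacles are in the quantitative bookkeeping at each scale: verifying that (i) the boundary correction arising from cubes that slightly overflow $\tilde B^{k-1}_i$ contributes a negligible amount to the telescoping sum, (ii) the separation hypothesis of Lemma \ref{lemmabo} holds uniformly in $k>k^{*}$, and (iii) the per-level error is genuinely summable of size $\delta_k^{1/d}$. A secondary subtlety is the annular case at level $k^{*}$, where $x$ narrowly misses $\tilde B^{k^{*}}_{j^{*}}$, so the reflectionless identity does not apply directly and must be supplemented by a direct estimate on the shell of width $\delta_{k^{*}+1}r_{k^{*}}$.
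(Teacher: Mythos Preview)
Your telescoping-in-generations scheme is a \emph{different} decomposition from the one the paper uses. The paper does not compare $\mu^{k}$ with $\mu^{k-1}$ at all; instead it fixes the single measure $\mu^{m}$, picks the nearest point $x^{*}\in\supp\mu$, and decomposes \emph{space} into the nested annuli $B^{k-1}(x^{*})\setminus B^{k}(x^{*})$ for $1\le k\le q$ (where $q$ is the first index with $r_{q}\le\varepsilon$) together with the core $B^{q}(x^{*})$. On each annulus it compares $\mu^{m}$ restricted to the children $B^{k}_{j}\subset B^{k-1}(x^{*})$ against $r_{k-1}^{-1}m_{d}$ on the matching cubes $Q^{k}_{j}$ via Lemma~\ref{lemmabo}, and then compares the union of those cubes with the \emph{single} ball $\widetilde B^{k-1}(x^{*})$, invoking the reflectionless Lemma~\ref{reflectionless} on that one ball. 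The advantage of anchoring at $x^{*}$ is that only \emph{one} parent ball appears per level, so the symmetric--difference (``boundary'') correction and the reflectionless step are each applied once, not summed over all $r_{k-1}^{-(d-1)}$ parents as your scheme would require.

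There is a concrete gap in your item (ii). You claim that for every $k>k^{*}$ the separation hypothesis $\dist(x,Q^{k}_{\ell})\gtrsim \sqrt[d]{\kappa_{d}r_{k}^{d-1}r_{k-1}}/8$ of Lemma~\ref{lemmabo} follows from property~(3) together with $x\notin B^{k}_{j}$. It does not. Property~(3) separates the ball $B^{k}_{j}$ from the boundary $\partial Q^{k}_{j}$; it says nothing about the distance from an arbitrary point $x\notin B^{k}_{j}$ to the cube $Q^{k}_{j}$. In fact at level $k=k^{*}+1$ the point $x$ lies in $B^{k^{*}}_{j^{*}}$, which is where the cubes $Q^{k^{*}+1}_{\ell}$ are packed, so $x$ belongs to (or sits on the boundary of) at least one such cube and $\dist(x,Q^{k^{*}+1}_{\ell})=0$ for that $\ell$. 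The paper meets exactly this situation and treats the $O(1)$ near cubes by a direct crude estimate (their ``Otherwise'' case, using \eqref{A}); your plan omits this case entirely. Related to this, your item (i) is also understated: in the telescoping approach the boundary correction has to be summed over \emph{all} parents at level $k-1$, and a naive application of \eqref{A} to the total symmetric difference produces a factor $r_{k-1}^{-(d-1)/d}$ that blows up; one must instead combine the decay of $K$ with the $(d-1)$--growth of $\mu^{k-1}$, which at best yields a logarithmic loss $\delta_{k}\log(1/r_{k-1})$ that is not obviously covered by the hypothesis $\sum_{k}\delta_{k}^{1/d}<\infty$. The paper's annular decomposition around $x^{*}$ sidesteps both difficulties at once.
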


\begin{proof}
Let $x^*\in\;\supp(\mu)$ be such that $\dist(x,x^*)=\varepsilon$ and fix $m$ with $r_m<\frac{\varepsilon}{2d}$. Let $q$ be the least integer with $r_q\leq\varepsilon$ (hence $m\geq q$). Write
\begin{equation}
 \begin{split}
  \int K(x-\xi)d\mu^{m}(\xi)&=\int_{B^{q}(x^*)}K(x-\xi)d\mu^{m}(\xi)+\sum_{k=1}^q\int_{B^{k-1}(x^*)\setminus B^k(x^*)}K(x-\xi)d\mu^{m}(\xi)\\&=A_1+A_2.
 \end{split}
\end{equation}
To estimate term $A_1$, notice that for any $\xi\in\;\supp(\mu)$, we have $\supp(\mu^m)\cap B^m(\xi)\neq\emptyset$. Therefore 
\begin{equation}\label{ep}
|x-\xi|\geq\dist(x,\supp(\mu^m))\geq\varepsilon-(1+\delta_{m+1})r_m\geq\varepsilon-\frac{\varepsilon}{d}\geq\frac{\varepsilon}{2}. 
\end{equation}
So, using property $(2)$ of the previous section, we get
$$|A_1|=\left|\int_{B^{q}(x^*)}K(x-\xi)d\mu^{m}(\xi)\right|\lesssim\left(\frac{2}{\varepsilon}\right)^{d-1}\mu^m(B^{q}(x^*))=\left(\frac{2}{\varepsilon}\right)^{d-1}r_q^{d-1}\leq 2^{d-1}.$$
To estimate term $A_2$, we claim that there exists $C=C(d)>0$ such that for any $k\in\N$ with $1\leq k\leq q$, 
\begin{equation}\label{claim}
\left|\int_{B^{k-1}(x^*)\setminus B^k(x^*)}K(x-\xi)d\mu^m(\xi)\right|\leq C\delta_k^{\frac{1}{d-1}}+C\sqrt[d]{\frac{\varepsilon}{r_{k-1}}}.
\end{equation}

If claim \eqref{claim} holds, then since $\delta_k=\sqrt{d}\sqrt[d]{\kappa_d(r_k/r_{k-1})^{d-1}}$ and by the definition of $q$, we have $r_q\leq\varepsilon$ and $r_k>\varepsilon$ for $1\leq k\leq q-1$, 
$$|A_2|\leq C\sum_{k=1}^{q}\delta_k^{\frac{1}{d-1}}+C\left(\sum_{k=1}^{q-1}\sqrt[d]{\left(\frac{r_k}{r_{k-1}}\right)^{d-1}}+\sqrt[d]{\frac{\varepsilon}{r_{q-1}}}\right)\leq C\left(\sum_{k=1}^{q}\delta_k^{\frac{1}{d-1}}+1\right)\leq C.$$
Let us mention again that all the constants, that appear in the proof, may depend on $d$ although sometimes we do not write the explicit dependence.

To show claim \eqref{claim}, we will use Lemma \ref{lemmabo}. Set $$\mathcal{A}=\{j:B_j^k\neq B^k(x^*)\;\mbox{ and }\;B_j^k\subset B^{k-1}(x^*)\}.$$ 
If $j\in\mathcal A$ and $\displaystyle{\dist(x,Q^k_j)\geq\frac 1 8\sqrt[d]{\kappa_dr_{k-1}r_k^{d-1}}}$, then we apply Lemma \ref{lemmabo} with $\displaystyle{\nu_1=\chi_{Q^k_j}\frac{m_d}{r_{k-1}}}$, $\displaystyle{\nu_2=\chi_{B^k_j}\mu^m}$, $R=r_{k-1}$, $r=r_k$ and 
$\displaystyle{x_0=x_{Q^k_j}}$. Hence,
\begin{equation}\label{first}
 \begin{split}
 \big|\int_{Q^k_j}K(x-\xi)\frac{dm_d(\xi)}{r_{k-1}}&-\int_{B^k_j}K(x-\xi)d\mu^m(\xi)\big|\\&\leq C\left(\frac{\sqrt[d]{r_{k-1}r_k^{d-1}}}{r_{k-1}}\int_{Q^k_j}\frac{dm_d(\xi)}{|x-\xi|^d}+r_k\int_{B^k_j}\frac{d\mu^m(\xi)}{|x-\xi|^d}\right).
 \end{split}
\end{equation}
Otherwise, namely if $j\in\mathcal A$ and $\displaystyle{\dist(x,Q^k_j)\leq\frac 1 8\sqrt[q]{\kappa_dr_{k-1}r_k^{d-1}}}$ (notice that there are at most $2^d$ pairwise disjoint cubes $Q^k_j$ in this situation, and it can only happen for $k=q$), then using \eqref{ep}, property $(2)$ of the previous section and the fact that for sets $A$ with finite measure, 
\begin{equation}\label{A}\int_A|K(\xi)|dm_d(\xi)\leq C\sqrt[d]{m_d(A)},
\end{equation} we get
\begin{equation}\label{second}
 \begin{split}
 \big|\int_{Q^k_j}K(z-\xi)&\frac{dm_d(\xi)}{r_{k-1}}-\int_{B^k_j}K(z-\xi)d\mu^m(\xi)\big|\\&\leq \frac{C}{r_{k-1}}\sqrt[d]{m_d(Q^k_j)}+\left(\frac{2}{\varepsilon}\right)^{d-1}\mu^m(B_j^k)\leq C\delta_k,
 \end{split}
\end{equation}
the last inequality coming from the fact that since $$\frac 1 8\sqrt[d]{\kappa_dr_{k-1}r_k^{d-1}}\geq\dist(x,Q^k_j)\geq\dist(x^*,Q^k_j)-\dist(x,x^*)\geq\frac 1 4\sqrt[d]{\kappa_dr_{k-1}r_k^{d-1}}-\varepsilon,$$ we have 
$\displaystyle{\varepsilon\geq\frac 1 8\sqrt[d]{\kappa_dr_{k-1}r_k^{d-1}}}$. 

Now write,
\begin{equation}
 \begin{split}
 \big|\int_{B^{k-1}(x^*)\setminus B^k(x^*)}&K(x-\xi)d\mu^m(\xi)\big|\leq A_3+A_4,
\end{split}
\end{equation}
where  $$A_3=\big|\int_{B^{k-1}(x^*)\setminus B^k(x^*)}K(x-\xi)d\mu^m(\xi)-\int_{\bigcup_{j\in\mathcal{A}}Q^k_j}K(x-\xi)\frac{dm_d(\xi)}{r_{k-1}}\big|$$ and $$A_4=\big|\int_{\bigcup_{j\in\mathcal{A}}Q^k_j}K(x-\xi)\frac{dm_d(\xi)}{r_{k-1}}\big|.$$ 
Using \eqref{first} and \eqref{second} we obtain
\begin{equation}
\begin{split}
A_3&\leq C\left(\delta_k+\sqrt[d]{\left(\frac{r_k}{r_{k-1}}\right)^{d-1}}\int_{B(x,2r_{k-1})\setminus B(x,\frac18\sqrt[d]{\kappa_dr_k^{d-1}r_{k-1}})}\frac{dm_d(\xi)}{|x-\xi|^d}+r_k\int_{\C\setminus B(x,\frac18\sqrt[d]{\kappa_dr_k^{d-1}r_{k-1}})}\frac{d\mu^m(\xi)}{|x-\xi|^d}\right)\\&\leq
C\left(\delta_k+\sqrt[d]{\left(\frac{r_k}{r_{k-1}}\right)^{d-1}}\log(\frac{r_{k-1}}{r_k})+\frac{r_k}{\sqrt[d]{r_k^{d-1}r_{k-1}}}\right)\\&\leq C\left(\delta_k+\delta_k\log(\frac 1{\delta_k})+\sqrt[d]{\frac{r_k}{r_{k-1}}}\right)\leq C\delta_k^{\frac 1{d}}.
\end{split}
\end{equation}
To estimate $A_4$, let $\displaystyle{\widetilde B^{k-1}(x^*)}$ denote the ball $\widetilde B^{k-1}_j$ containing $x^*$ and write
\begin{equation}
\begin{split}
A_4&\leq\big|\int_{\bigcup_{j\in\mathcal{A}}Q^k_j}K(x-\xi)\frac{dm_d(\xi)}{r_{k-1}}-\int_{\widetilde B^{k-1}(x^*)}K(x-\xi)\frac{dm_d(\xi)}{r_{k-1}}\big|\\&+\big|\int_{\widetilde B^{k-1}(x^*)}K(x-\xi)\frac{dm_d(\xi)}{r_{k-1}}\big|=A_{41}+A_{42}.
\end{split}
\end{equation}
Notice that $x\in \big(1+\frac{\varepsilon}{r_{k-1}}\big)B^{k-1}(x^*)$. Then, by the reflectionless property in Lemma \ref{reflectionless} and \eqref{A},
\begin{equation}
\begin{split}
A_{42}&=\big|\int_{\big(1+\frac{\varepsilon}{r_{k-1}}\big)B^{k-1}(x^*)\setminus\widetilde B^{k-1}(x^*)}K(x-\xi)\frac{dm_d(\xi)}{r_{k-1}}\big|\\&\leq\frac{C}{r_{k-1}}m_d\left(\big(1+\frac{\varepsilon}{r_{k-1}}\big)B^{k-1}(x^*)\setminus\widetilde B^{k-1}(x^*)\right)^{1/d}\leq C\sqrt[d]{\delta_k+\frac{\varepsilon}{r_{k-1}}}.
\end{split}
\end{equation}

To estimate term $A_{41}$, we use \eqref{A}. Hence,

$$A_{41}\leq \frac{C}{r_{k-1}}m_d\big(\bigcup_{j\in\mathcal{A}}Q^k_j\triangle\widetilde B^{k-1}(z^*)\big)^{1/d}\leq C\sqrt[d]{\delta_k}.$$

This finishes the proof of claim \eqref{claim} and the proposition.

\end{proof}

\section{Appendix}
For $n\in\N$, $n\geq 2$ and $z\in\C\setminus{\{0\}}$, we consider the family of kernels $$K_n(z)=\frac{P_{2n-1}(z)}{|z|^{2n}},$$ where
$P_{2n-1}$ is an homogeneous polynomial of degree $2n-1$. 

Decomposing the homogeneous polynomial $P_{2n-1}$ in spherical harmonics we get
\begin{equation}\label{decomposition}
\begin{split}
K_n(z)&=\frac{Q_{2n-1}(z)+Q_{2n-3}(z)|z|^2+\cdots+Q_1(z)|z|^{2n-2}}{|z|^{2n}}=\sum_{j=1}^{n}\frac{Q_{2j-1}(z)}{|z|^{2j}},
\end{split}
\end{equation}
where the $Q_{2j-1}$ are harmonic polynomials of degree $2j-1$. 
We consider now the family of kernels $$K_n^*(z)=\sum_{j=2}^{n}\frac{Q_{2j-1}(z)}{|z|^{2j}},$$ that is the ones for which the term $Q_1$ does not appear in the decomposition \eqref{decomposition}.
Since we can write each polynomial $Q_{2j-1}$ as $$Q_{2j-1}(z)=A_jz^{2j-1}+B_j\bar z^{2j-1},$$ for some constants $A_j$ and $B_j$, we have

$$K_n^*(z)=\sum_{j=2}^n\left( A_j\frac{z^{j-1}}{\bar z^j}+B_j\frac{\bar z^{j-1}}{z^j}\right)$$

Let $T_n$ and $T$ be the operators related to the kernels $K^*_n(z)$ and $K(z)=\frac{\bar z^{n-1}}{z^n}$, for $n\in\N$, $n\geq 2$ and $z\in\C\setminus\{0\}$, respectively. Clearly, Theorem \ref{maind} applies to both operators, and the proof follows  the same scheme as in \cite{jayenazarov}, as we did in the previous sections for the higher dimensional case. We want to include a different version of the proof of the reflectionless property in lemma \ref{reflectionless} for this case, without using the Fourier transform. This proof is based on lemma 3.1 in \cite{jayenazarov} but adapted to our family of  kernels. We find it interesting to include it here because in the plane it is not necessary to use the Fourier transform, the proof is just a short computation. 
\subsection{Proof of Lemma \ref{reflectionless} for the kernels $K(z)=\bar z^{n-1}/z^n$, $z\in\C\setminus\{0\}$.}

Recall that we want to show that for $z\in\C$, $r>0$ and any $\omega\in B(z,r)$, $$\int_{B(z,r)}K(\omega-\xi)dm_2(\xi)=0.$$

Without loss of generality, we may assume $z=0$ and $r=1$. 
Notice that 
$$K(\omega-\xi)=\frac{(\overline{\omega-\xi})^{n-1}}{(\omega-\xi)^n}=\frac{(-1)^n(\overline{\omega-\xi})^{n-1}}{\xi^n}\frac 1{\left(1-\frac{\omega}{\xi}\right)^n}.$$

Hence, for $|\omega|<|\xi|$, 

\begin{equation}
\begin{split}
K(\omega-\xi)&=\frac{(-1)^n(\overline{\omega-\xi})^{n-1}}{(n-1)!\;\xi^n}\sum_{k\geq n-1}k(k-1)\cdots(k-n+2)\left(\frac{\omega}{\xi}\right)^{k-n+1}\\
&=\frac{(-1)^n(\overline{\omega-\xi})^{n-1}}{(n-1)!\;\xi^n}\sum_{m\geq 0}(m+n-1)(m+n-2)\cdots(m+1)\left(\frac{\omega}{\xi}\right)^m\\
&=(-1)^n\sum_{k=0}^{n-1}\frac{\overline\omega^{n-1-k}\overline\xi^k}{k!(n-1-k)!}\sum_{m\geq 0}(m+n-1)(m+n-2)\cdots(m+1)\frac{\omega^m}{\xi^{m+n}},
\end{split}\end{equation}
which implies  $\displaystyle{\int_{|\xi|=t}K(\omega-\xi)dm_1(\xi)=0}$ for $|\omega|<t$, due to the fact that for  $k\neq l$ in $\Z$, we have 
\begin{equation}
\label{id1}
\int_{|\xi|=t}\bar\xi^k\xi^ldm_1(\xi)=0.
\end{equation}
For $|\omega|>|\xi|$, we write
\begin{equation}
\begin{split}
K(\omega-\xi)&=\frac{(-1)^n(\overline{\omega-\xi})^{n-1}}{(n-1)!\;\omega^n}\sum_{m\geq 0}(m+n-1)(m+n-2)\cdots(m+1)\left(\frac{\xi}{\omega}\right)^m\\
&=\sum_{k=0}^{n-1}\frac{(-1)^{n+k}\bar\xi^k\bar\omega^{n-1-k}}{k!(n-1-k)!\;\omega^n}\sum_{m\geq 0}(m+n-1)(m+n-2)\cdots(m+1)\left(\frac{\xi}{\omega}\right)^m.
\end{split}
\end{equation}
Integrating for $t<|\omega|$ and using \eqref{id1} we obtain
\begin{equation}
\begin{split}
&\int_{|\xi|=t}K(\omega-\xi)dm_1(\xi)\\&=\sum_{k=0}^{n-1}\frac{(-1)^{n+k}\bar\omega^{n-1-k}}{k!(n-1-k)!\;\omega^{n+k}}(k+n-1)(k+n-2)\cdots(k+1)\int_{|\xi|=t}|\xi|^{2k}dm_1(\xi)\\
&=2\pi\sum_{k=0}^{n-1}\frac{(-1)^{n+k}\bar\omega^{n-1-k}}{k!(n-1-k)!\;\omega^{n+k}}(k+n-1)(k+n-2)\cdots(k+1)t^{2k+1}.
\end{split}
\end{equation}
Therefore,
$$\int_0^{|\omega|}\int_{|\xi|=t}K(\omega-\xi)dm_1(\xi)dt=\frac{\pi\bar\omega^n}{\omega^{n-1}}\sum_{k=0}^{n-1}\frac{(-1)^{n+k}(k+n-1)(k+n-2)\cdots(k+2)}{k!(n-1-k)!}.$$
So, the desired conclusion follows if we show that 
\begin{equation}
\label{zero}
\sum_{k=0}^{n-1}\frac{(-1)^k(k+n-1)(k+n-2)\cdots(k+2)}{k!(n-1-k)!}=0.
\end{equation}
Consider the function $$G(x)=\sum_{k=0}^{n-1}\frac{(-1)^k(k+n-1)(k+n-2)\cdots(k+2)}{k!(n-1-k)!}x^{k+1}.$$
We will show that $G(1)=0$ and therefore \eqref{zero} holds.
Let $$f(x)=\frac{x^{n-1}(1-x)^{n-1}}{(n-1)!}=\sum_{k=0}^{n-1}\frac{(-1)^k}{k!(n-1-k)!}x^{k+n-1}.$$
Then, clearly $f^{n-2)}(x)=G(x)$ and hence $G(1)=0$.
\qed
\label{Bibliography}


\begin{thebibliography}{MNTVO12}

\bibitem[CMPT]{cmpt} V. Chousionis, J. Mateu, L. Prat and X. Tolsa. {\em Calder\'on-Zygmund kernels and rectifiability in the plane.}{ Adv. Math. 231 (2012), no. 1, 535-568.} 
\bibitem[CP]{chousionisprat} V. Chousionis and L. Prat, {\em Some Calder\'on-Zygmund kernels and their relations to Wolff capacities and rectifiability.} 
{Math. Z.} 282 (2016) no. 1-2, 435--460.
\bibitem[Ch]{chunaev} P. Chunaev. {\em A new family of singular integral operators whose $L^2-$boundedness implies rectifiability.}{ J. Geom Anal 27, 2725--2757 (2017).}
\bibitem[DS]{ds} G. David and S. Semmes. {\em Analysis of and on uniformly rectifiable sets.} {Mathematical Surveys and Monographs, 38. American Mathematical Society, Providence, RI, 1993.}


\bibitem[Huo2]{huo2} P. Huovinen. {\em A nicely behaved singular integral on a purely unrectifiable set.} {Proc. Amer. Math. Soc. 129 (11) (2001) 3345-3351.}

\bibitem[JM]{jayemerchan} B. Jaye and T. Merch\'an, {\em On the problem of existence in principal value of a Calder\'on-Zygmund operator on a space of non-homogeneous type.} To appear in Proc. London Math. Soc.

\bibitem[JN]{jayenazarov} B. Jaye and F. Nazarov. {\em Three revolutions on the kernel are worse than one} {Int. Math. Res. Not.} Volume 2018, Issue 23, December 2018, Pages 7305--7317.

\bibitem[Le]{leger} J. L\'eger. {\em Menger Curvature and Rectifiability} {Ann. Math.} 149, 831--869.
\bibitem[MMV]{mmv}
		P. Mattila, M. Melnikov and J. Verdera. {\em The Cauchy integral, analytic capacity, and uniform rectifiability}. Ann. of Math. (2) 144, 1996, pp. 127-136.\bibitem[MOV]{mov} J. Mateu, J. Orobitg and J. Verdera. {\em Estimates for the maximal singular integral in terms of the singular integral: the case of even kernels.}{ Ann. of Math. (2) 174 (2011), no. 3, 1429–1483}

\bibitem[NToV1]{NTV_acta}
		 F. Nazarov, X. Tolsa and A. Volberg. {\em  On the uniform rectifiability of AD-regular measures with bounded Riesz transform operator: the case of codimension 1.} {Acta Math.} 213 (2014), no. 2, 237--321.

\bibitem[NToV2]{NTV_publ}
		 F. Nazarov, X. Tolsa and A. Volberg. {\em The Riesz transform, rectifiability, and removability for Lipschitz harmonic functions.} {Publ. Mat.} 58 (2014), no. 2, 517--532. 
\bibitem[NTV]{ntv} F. Nazarov, S. Treil and A. Volberg.
{\em The Tb-theorem on non-homogeneous
spaces that proves a conjecture of Vitushkin.} CRM preprint No. 519 (2002), pp. 1--84.

\bibitem[PPT]{ppt}C. Puliatti, L. Prat and X. Tolsa. {\em $L^2-$boundedness of gradients of single layer potentials and uniform rectifiability.} Preprint (2018). To appear in Analysis \& PDE.
\bibitem[St]{stein} E. M. Stein. {\em Singular integrals and differentiability properties of functions.} {Princeton University Press, Princeton, 1970.}
		
\end{thebibliography}
\end{document}